\newtheorem*{ThmA}{Theorem A}
\newtheorem*{CorB}{Corollary B}
\newtheorem{Thm}{Theorem} 
\newaliascnt{Prop}{Thm}
\newtheorem{Prop}[Prop]{Proposition}
\numberwithin{equation}{section}
\renewcommand{\phi}{\varphi}
\newcommand{\C}{\operatorname{C}}
\newcommand{\N}{\operatorname{N}}
\newcommand{\Sym}{\operatorname{Sym}}
\newcommand{\GL}{\operatorname{GL}}
\newcommand{\Irr}{\operatorname{Irr}}
\newcommand{\Syl}{\operatorname{Syl}}
\mathchardef\ordinarycolon\mathcode`\:  
\title{Pseudo Frobenius numbers}
\author{Benjamin Sambale\footnote{Fachbereich Mathematik, TU Kaiserslautern, 67653 Kaiserslautern, Germany, 
\href{mailto:sambale@mathematik.uni-kl.de}{sambale@mathematik.uni-kl.de}}}
\date{\today}
\begin{document}
\frenchspacing
\maketitle
\begin{abstract}\noindent
For a prime $p$, we call a positive integer $n$ a Frobenius $p$-number if there exists a finite group with exactly $n$ subgroups of order $p^a$ for some $a\ge 0$. Extending previous results on Sylow's theorem, we prove in this paper that every Frobenius $p$-number $n\equiv 1\pmod{p^2}$ is a Sylow $p$-number, i.\,e., the number of Sylow $p$-subgroups of some finite group. As a consequence, we verify that $46$ is a pseudo Frobenius $3$-number, that is, no finite group has exactly $46$ subgroups of order $3^a$ for any $a\ge 0$.
\end{abstract}

\textbf{Keywords:} Frobenius' theorem, Sylow's theorem, number of $p$-subgroups\\
\textbf{AMS classification:} 20D20 
\section{Introduction}

Motivated by Sylow's famous theorem in finite group theory, we investigated \emph{pseudo Sylow $p$-numbers} in a previous paper~\cite{SambaleSylow}. These are positive integers $n\equiv 1\pmod{p}$, where $p$ is a prime, such that no finite group has exactly $n$ Sylow $p$-subgroups. It is known that such numbers exist whenever $p$ is odd and we gave an elementary argument for $p=17$ and $n=35$.

The present paper is based on Frobenius' extension~\cite{FrobeniusSylow2} of Sylow's theorem:

\begin{Thm}[\textsc{Frobenius}]\label{frob}
Let $p$ be a prime and $a\ge 0$ such that $p^a$ divides the order of a finite group $G$. Then the number of subgroups of order $p^a$ of $G$ is congruent to $1$ modulo $p$.
\end{Thm}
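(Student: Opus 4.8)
The plan is to run the classical counting argument, in the style of Wielandt's proof of Sylow's theorem. Write $|G|=p^rm$ with $p\nmid m$; since $p^a\mid|G|$ we have $0\le a\le r$. Let $\Omega$ be the set of all subsets of $G$ of cardinality $p^a$, and let $G$ act on $\Omega$ by left translation, $g\cdot X=gX$. The aim is to read the number $N$ of subgroups of order $p^a$ of $G$ off from the $G$-orbits on $\Omega$, after locating $|\Omega|=\binom{p^rm}{p^a}$ modulo a suitable power of $p$.

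First I would analyse the orbits. For $X\in\Omega$ the stabilizer $G_X=\{g\in G:gX=X\}$ satisfies $G_Xx\subseteq X$ for each $x\in X$, so $X$ is a union of right cosets of $G_X$ and $|G_X|$ divides $|X|=p^a$; writing $|G_X|=p^b$ with $b\le a$, the orbit of $X$ has length $[G:G_X]=p^{r-b}m$. If $|G_X|=p^a$ then $X$ is a single right coset $Hx$ of the subgroup $H:=G_X$ of order $p^a$; conversely, for every subgroup $H\le G$ of order $p^a$ and every $x\in G$ the set $Hx$ belongs to $\Omega$ with stabilizer exactly $H$. Since such a coset $Hx$ recovers $H$ as its stabilizer, cosets of distinct subgroups are distinct subsets; hence the members of $\Omega$ whose stabilizer has order $p^a$ are precisely the right cosets of the subgroups of order $p^a$, and as each of the $N$ such subgroups contributes $p^{r-a}m$ cosets, there are $N\,p^{r-a}m$ of them in total. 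As the condition $|G_X|=p^a$ is invariant along orbits, this set is a union of $G$-orbits, each of length $p^{r-a}m$; hence exactly $N$ orbits have a stabilizer of order $p^a$. Every other orbit has length $p^{r-b}m$ with $b<a$, hence divisible by $p^{r-a+1}$. Summing orbit lengths gives
\[
\binom{p^rm}{p^a}=|\Omega|\equiv N\,p^{r-a}m\pmod{p^{r-a+1}}.
\]

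Next I would remove the binomial coefficient by calibration: the displayed congruence holds for \emph{every} group of order $p^rm$, and the cyclic group $\mathbb{Z}/p^rm\mathbb{Z}$ has a unique subgroup of order $p^a$, so applying the congruence to it yields $\binom{p^rm}{p^a}\equiv p^{r-a}m\pmod{p^{r-a+1}}$. Comparing this with the congruence for $G$ gives $N\,p^{r-a}m\equiv p^{r-a}m\pmod{p^{r-a+1}}$; dividing by $p^{r-a}$ and using that $m$ is invertible modulo $p$ leaves $N\equiv1\pmod p$, as asserted. (In particular $N\ge1$, so the argument incidentally recovers the existence part of Sylow's theorem as well.)

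The point I expect to need the most care is the orbit analysis in the first step: one must check that the subsets of $\Omega$ fixed set-wise by a subgroup of order $p^a$ are exactly the right cosets of such subgroups, and that this family splits into precisely $N$ orbits of length $p^{r-a}m$ — not, as one might at first guess, into one orbit per subgroup. Granting that, the binomial calibration and the divisibility bookkeeping are routine.
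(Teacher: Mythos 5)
Your proof is correct. Note that the paper does not actually prove Theorem~\ref{frob}: it is stated as a classical result of Frobenius, with a pointer to Robinson's note for an elementary proof. Your argument --- letting $G$ act by left translation on the $p^a$-element subsets, observing that the subsets with stabilizer of order $p^a$ are exactly the right cosets of the $N$ subgroups of order $p^a$ and form $N$ orbits of length $p^{r-a}m$ while all other orbits have length divisible by $p^{r-a+1}$, and then calibrating the binomial coefficient $\binom{p^rm}{p^a}$ against the cyclic group of order $p^rm$ --- is a complete and standard Wielandt-style proof, and the cyclic-group calibration is precisely the device of the cited Robinson reference. All the delicate points check out: stabilizer orders are constant along orbits (stabilizers of translates are conjugate), a coset $Hx$ determines $H$ as its stabilizer so there is no double counting, and dividing the final congruence by $p^{r-a}m$ is legitimate since $p\nmid m$. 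The only thing worth making explicit in a written version is why the $Np^{r-a}m$ cosets split into orbits of length exactly $p^{r-a}m$ (the orbit of $Hx$ consists of right cosets of the various conjugates of $H$, not of $H$ alone), but you have flagged and handled this correctly.
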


An elementary proof of \autoref{frob} has been given by Robinson~\cite{RobinsonSylow}. It is a natural question to ask if every positive integer $n\equiv 1\pmod{p}$ is a \emph{Frobenius $p$-number}, i.\,e., there exists a finite group with exactly $n$ subgroups of order $p^a$ for some $a\ge 0$. The following refinement of Frobenius' theorem, proved by Kulakoff~\cite{Kulakoff} for $p$-groups and extended to arbitrary finite groups by P.~Hall~\cite{HallFrobenius}, shows that most pseudo Sylow $p$-numbers cannot be Frobenius $p$-numbers.

\begin{Thm}[\textsc{Kulakoff--Hall}]\label{KH}
Let $p$ be a prime and $a\ge 0$ such that $p^{a+1}$ divides the order of a finite group $G$. Then the number of subgroups of order $p^a$ of $G$ is congruent to $1$ or to $1+p$ modulo $p^2$. 
\end{Thm}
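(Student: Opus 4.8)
The plan is to reduce \autoref{KH} to the case where $G$ is a $p$-group and then prove it there. First, for $p=2$ there is nothing left to do: \autoref{frob} already says that the number of subgroups of order $2^a$ is odd, and an odd number is congruent to $1$ or to $3=1+p$ modulo $4$. So assume from now on that $p$ is odd.

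\emph{Reduction to $p$-groups.} Fix $P\in\Syl_p(G)$ and write $s_b(H)$ for the number of subgroups of order $p^b$ of a finite group $H$. The goal is $s_a(G)\equiv s_a(P)\pmod{p^2}$, after which one invokes the $p$-group case for $P$ (which satisfies $p^{a+1}\mid|P|$). Every subgroup of order $p^a$ lies in some Sylow $p$-subgroup, and since $p^{a+1}\mid|G|$ such a subgroup $U$ sits \emph{properly} inside a Sylow $p$-subgroup, where normalizers grow; hence $\N_G(U)\supsetneq U$ and the Sylow $p$-subgroups above $U$ can be counted inside $\N_G(U)$. Double counting the pairs $(U,Q)$ with $|U|=p^a$, $Q\in\Syl_p(G)$ and $U\le Q$ gives $s_a(G)\equiv s_a(P)\pmod p$ at once (each $U$ lies in $\equiv 1\pmod p$ Sylow subgroups and the $s_a(Q)$ coincide since the $Q$ are conjugate); sharpening this to a congruence modulo $p^2$ is the content of P.~Hall's extension and needs a more careful weighting of the count --- in effect one pushes the elementary counting argument of the type used in Robinson's proof of \autoref{frob} one digit further and checks that it gives the same answer for $G$ and for $P$.

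\emph{The $p$-group case.} Let $P$ be a $p$-group of order $p^n$ with $p$ odd and $0<a<n$. If $P$ is cyclic then $s_a(P)=1$; otherwise I would prove $s_a(P)\equiv 1+p\pmod{p^2}$ by induction on $n$ and, for fixed $P$, downward induction on $a$. For $a=n-1$ the subgroups of order $p^a$ are exactly the maximal subgroups of $P$, of which there are $1+p+\dots+p^{d-1}$ with $|P/\Phi(P)|=p^d\ge p^2$, hence $\equiv 1+p\pmod{p^2}$. For $a=1$ one fixes a central subgroup $N$ of order $p$: a subgroup of order $p$ either equals $N$ or meets $N$ trivially, and in the latter case it is one of the $p$ subgroups of order $p$ distinct from $N$ inside its product with $N$, which is elementary abelian of order $p^2$; this yields $s_1(P)=1+p\cdot\#\{Q\le P:N\le Q\cong C_p\times C_p\}$, and the remaining point --- that this count is $\equiv 1\pmod p$ --- is where the classical structure theorem "for odd $p$, a $p$-group with a unique subgroup of order $p$ is cyclic" enters, together with a reduction modulo $N$ and the induction. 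For the intermediate range $2\le a\le n-2$ I would count the flags $U<V$ with $|U|=p^a$ and $|V|=p^{a+1}$: summing over $V$ gives $\sum_V s_a(V)$, with each $s_a(V)$ the number of maximal subgroups of $V$ and so $\equiv 1$ or $1+p\pmod{p^2}$; summing over $U$ gives $\sum_U s_1(\N_P(U)/U)$, with each $\N_P(U)/U$ a non-trivial proper section of $P$ to which the $a=1$ case and the induction apply. Comparing the two sums expresses $s_a(P)$ modulo $p^2$ through $s_{a+1}(P)$ (already known by the downward induction) and two correction terms --- the number of non-cyclic $V$'s and the number of $U$'s with $\N_P(U)/U$ non-cyclic --- which one must show cancel modulo $p$.

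Throughout, the obstacle is the passage from mod $p$ to mod $p^2$: orbit counting and Frobenius-style arguments only yield congruences modulo $p$, and the missing digit has to be extracted from the finer structure --- the centrality of minimal normal subgroups of a $p$-group, and the odd-$p$ dichotomy between cyclic groups and groups with more than one subgroup of order $p$. I expect the honest accounting of the exceptional orbits of length exactly $p$ in the inductive step, together with making Hall's reduction precise modulo $p^2$, to be the parts requiring the most care.
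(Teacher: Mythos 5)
The paper does not prove \autoref{KH} at all: it is quoted as a classical result of Kulakoff (for $p$-groups) and P.~Hall (for arbitrary groups), and the author explicitly remarks that its proof, though elementary, lies deeper than \autoref{frob}. So there is no in-paper proof to compare against, and your proposal has to stand on its own. As it stands it is an outline, not a proof. What is actually complete: the case $p=2$ (Frobenius plus ``odd means $1$ or $3$ bmod $4$''), and, for $p$-groups, the cases $a=0$ and $a=n-1$ (Burnside basis theorem). Everywhere else, at exactly the points where the second $p$-adic digit has to be extracted, you state what needs to be shown and then defer it.

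Concretely, three separate nontrivial theorems are left unproved. (i) The reduction $s_a(G)\equiv s_a(P)\pmod{p^2}$: your double count of pairs $(U,Q)$ only gives the congruence modulo $p$, and you yourself say that the sharpening ``is the content of P.~Hall's extension''; no mechanism for the extra digit is supplied. (ii) In the case $a=1$ you correctly derive $s_1(P)=1+pt$ with $t$ the number of subgroups $E\cong C_p\times C_p$ containing the central $N$, but the needed statement is $t\equiv 1\pmod p$. The theorem ``for odd $p$, a $p$-group with a unique subgroup of order $p$ is cyclic'' gives only $t\ge 1$ when $P$ is non-cyclic; the congruence is a genuinely harder counting statement (it amounts to comparing $s_1(P/N)$ with the number of cyclic subgroups of order $p^2$ through $N$, which drags in Kulakoff's companion theorem on the number of solutions of $x^p=1$). (iii) In the flag count for $2\le a\le n-2$, the identity you need, namely that the number of non-cyclic subgroups $V$ of order $p^{a+1}$ agrees modulo $p$ with the number of subgroups $U$ of order $p^a$ for which $\N_P(U)/U$ has more than one subgroup of order $p$, is precisely the combinatorial heart of Kulakoff's theorem; labelling it a ``correction term that must cancel'' does not establish it, and it does not follow from the downward induction you set up. The architecture you describe is viable and close in spirit to Hall's actual organization, but until (i)--(iii) are proved the argument establishes the theorem only in the easy boundary cases.
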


The proof of \autoref{KH} uses only elementary group theory, but it lies somewhat deeper than \autoref{frob} (Kulakoff~\cite{KulakoffMiller} pointed out some errors in an earlier proof attempt by Miller~\cite{MillerSylow}). 

In view of \autoref{KH}, we call $n$ a \emph{pseudo Frobenius $p$-number} if $n$ is congruent to $1$ or $1+p$ modulo $p^2$ and no finite group has exactly $n$ subgroups of order $p^a$ for any $a\ge 0$. Obviously, every pseudo Frobenius $p$-number is a pseudo Sylow $p$-number. Since we know from \cite{SambaleSylow} that every odd number is a Sylow $2$-number, it is clear that there are no pseudo Frobenius $2$-numbers. 

Our aim in this paper is to establish the existence of a pseudo Frobenius number.
The first choices are $n=1+p$ and $n=1+p^2$. However, it can be seen that the general linear group $G=\GL_2(p^a)$ has exactly $1+p^a$ Sylow $p$-subgroups for any $a\ge 1$ (the upper unitriangular matrices form a Sylow $p$-subgroup of $G$ and the corresponding normalizer is the Borel subgroup consisting of all upper triangular matrices). 
The next candidate is $n=1+p+p^2$, but this is clearly the number of subgroups of order $p$ in the elementary abelian group $G$ of order $p^3$ (every nontrivial element of $G$ generates a subgroup of order $p$ and two distinct subgroups intersect trivially). Now for $p=3$ we might consider $n=1+2\cdot 3^2=19$. However, $19$ is a prime and we know already from \cite{SambaleSylow} that for any prime $n\equiv 1\pmod{p}$ there exist (solvable affine) groups with exactly $n$ Sylow $p$-subgroups. Finally, we have mentioned in \cite{SambaleSylow} (proved by M.~Hall~\cite{MHall}) that $n=1+3+2\cdot 3^2=22$ \emph{is} a pseudo Sylow $3$-number. On the other hand, the number of subgroups of order $9$ in the abelian group $C_9\times C_3\times C_3$ is $22$ and therefore, $22$ is not a pseudo Frobenius number. (In general, the number of subgroups of a given isomorphism type in an abelian $p$-group is given by a \emph{Hall polynomial}.)

Our first theorem in this paper deals with the case $n\equiv 1\pmod{p^2}$.

\begin{ThmA}
Every Frobenius $p$-number $n\equiv 1\pmod{p^2}$ is a Sylow $p$-number. 
\end{ThmA}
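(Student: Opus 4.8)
Write $s_b(G)$ for the number of subgroups of order $p^b$ of a finite group $G$. The case $p=2$ is immediate: a Frobenius $2$-number is odd by \autoref{frob}, and every odd number is a Sylow $2$-number by \cite{SambaleSylow}. So let $p$ be odd, fix a finite group $G$ with $s_a(G)=n\equiv 1\pmod{p^2}$, and put $p^k=\lvert G\rvert_p$. If $a=k$ then $n=\lvert\Syl_p(G)\rvert$ and there is nothing to prove; so we may assume $a<k$.

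The crux of the argument---and the step I expect to be genuinely hard---is to promote \autoref{KH} to its \emph{equality case}: if $a<k$ and $s_a(G)\equiv 1\pmod{p^2}$, then a Sylow $p$-subgroup $P$ of $G$ is \emph{cyclic}. For $P$ itself a $p$-group this is Kulakoff's theorem (a non-cyclic $p$-group with $p$ odd has $\equiv 1+p\pmod{p^2}$ subgroups of every admissible order). For an arbitrary $G$ I would mimic the local reductions underlying the proofs of \autoref{frob} and \autoref{KH} by Robinson~\cite{RobinsonSylow} and Hall~\cite{HallFrobenius}: letting $P$ act by conjugation on the set of subgroups of order $p^a$ and analysing the fixed points of $Z=\Omega_1(\Z(P))$, one reduces the value of $s_a(G)$ modulo $p^2$ to that in the $p$-local subgroup $\N_G(Z)$, and then inducts on $\lvert G\rvert$---passing to the proper subgroup $\N_G(Z)$ when $Z\not\trianglelefteq G$, and to a suitable quotient when $Z\trianglelefteq G$---until the Sylow $p$-subgroup is normal and Kulakoff's theorem applies. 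It is here that the hypothesis $n\equiv 1\pmod{p^2}$ (rather than $n\equiv 1+p$) is essential.

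Granting that $P\cong C_{p^k}$ is cyclic, set $A:=\Omega_a(P)$. Since $P$ is abelian, $A$ is the unique subgroup of order $p^a$ of every Sylow $p$-subgroup containing it; as each $p^a$-subgroup lies in a conjugate of $P$, the subgroups of order $p^a$ of $G$ are precisely the $G$-conjugates of $A$. Hence $n=\lvert G:\N_G(A)\rvert$, which is prime to $p$ because $P\le\N_G(A)$. More generally, for $0\le b\le k$ the subgroups of order $p^b$ are the conjugates of $\Omega_b(P)$, and from $\N_G(\Omega_b(P))\le\N_G(\Omega_{b-1}(P))$ we obtain an increasing chain $1=s_0(G)\le s_1(G)\le\dots\le s_k(G)=\lvert\Syl_p(G)\rvert$ whose first $k$ members are, by the previous step, all $\equiv 1\pmod{p^2}$.

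It remains to realise $n=\lvert G:\N_G(A)\rvert$ as a Sylow $p$-number, which I would do by induction on $\lvert G\rvert$. If $A$ lies in a unique Sylow $p$-subgroup then $\N_G(A)=\N_G(P)$ and $n=\lvert\Syl_p(G)\rvert$. If $Z:=\Omega_1(A)$ is normal in $G$, then $G/Z$ has cyclic Sylow $p$-subgroup, $A/Z=\Omega_{a-1}(P/Z)$ and $\lvert G/Z:\N_{G/Z}(A/Z)\rvert=n$, so $n$ is a Sylow $p$-number by induction (or $a=1$ and $n=1$). If $G$ is $p$-nilpotent, say $G=K\rtimes P$ with $K=O_{p'}(G)$, then $G=K\N_G(A)$ yields $n=\lvert K:\N_K(A)\rvert=\lvert\Syl_p(K\rtimes A)\rvert$. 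The residual case---$Z\not\trianglelefteq G$ and $G$ not $p$-nilpotent---I would treat by exploiting the rigid structure of groups with a cyclic Sylow $p$-subgroup together with the chain above, the one delicate point being to preserve the count exactly (not merely modulo $p^2$) through each reduction. The principal obstacle, however, remains the equality case of \autoref{KH} for arbitrary finite groups described above.
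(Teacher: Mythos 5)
Your overall strategy is sound up to a point, and your first ``crux'' is in fact a known theorem of P.~Hall (\autoref{noncyclic} above, \cite[Lemma~4.61 and Theorem~4.6]{HallFrobenius}): for odd $p$ and $1<p^a<|P|$, the count is $\equiv 1\pmod{p^2}$ precisely when the Sylow $p$-subgroup $P$ is cyclic. So that step can simply be cited rather than reproved, and your deductions from it (all subgroups of order $p^a$ are conjugate to $A=\Omega_a(P)$, $n=|G:\N_G(A)|$, the cases where $A$ lies in a unique Sylow subgroup, where $\Omega_1(A)\unlhd G$, and where $G$ is $p$-nilpotent) are all correct.

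The genuine gap is your ``residual case''. When $\Omega_1(A)$ is not normal and $G$ is not $p$-nilpotent --- in particular when $G$ is simple --- you offer only an appeal to ``the rigid structure of groups with a cyclic Sylow $p$-subgroup'', but no such elementary structure theorem exists for odd $p$: plenty of nonabelian simple groups (e.g.\ $\mathrm{PSL}_2(q)$) have cyclic Sylow $p$-subgroups, and nothing in your argument prevents $A$ from lying in several Sylow subgroups there. This is exactly where the actual proof invests its two remaining ideas. First, it invokes Blau's CFSG-dependent theorem (\autoref{blau}): in a simple group with cyclic Sylow $p$-subgroups, distinct Sylow subgroups intersect trivially, so each $Q_i$ lies in a unique Sylow subgroup and $n=|{\Syl_p(G)}|$; contrapositively, a minimal counterexample cannot be simple. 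Second, for a nontrivial proper normal subgroup $N$ it factors $n=n_1n_2$, where $n_1$ counts the subgroups of order $p^a$ in $QN$ and $n_2$ counts their images in $G/N$, shows via \autoref{noncyclic} that both factors are again $\equiv 1\pmod{p^2}$ and strictly smaller than $n$ (the case $G=QN$ being excluded by the modular law), and then uses that Sylow $p$-numbers are closed under products, via $\Syl_p(H_1\times H_2)$. Your case split contains no analogue of either the trivial-intersection input or the multiplicative induction, so the proof does not close; indeed the paper explicitly acknowledges that Theorem~A rests on the CFSG through Blau's result, which your proposed elementary route cannot avoid.
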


While our proof is elementary, it relies implicitly on the complicated classification of the finite simple groups (CFSG for short in the following). As an application we obtain our first pseudo Frobenius number.

\begin{CorB}
The integer $46$ is a pseudo Frobenius $3$-number.
\end{CorB}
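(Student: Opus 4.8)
Since $46 = 1+5\cdot 3^{2}\equiv 1\pmod{3^{2}}$, the number $46$ is in particular congruent to $1$ (or $1+3$) modulo $3^{2}$, so by the definition of a pseudo Frobenius $3$-number it only remains to check that $46$ is not a Frobenius $3$-number; then $46$ will be a pseudo Frobenius $3$-number. Here the same congruence $46\equiv 1\pmod{3^{2}}$ lets me invoke Theorem~A: if $46$ were a Frobenius $3$-number it would be a Sylow $3$-number, so it suffices to prove that $|\Syl_{3}(G)|\ne 46$ for every finite group $G$. This last statement carries all the weight.

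The plan for it is a minimal-counterexample argument. Let $G$ have least order subject to $|\Syl_{3}(G)|=46$. For any $N\trianglelefteq G$, standard arguments (a Frattini argument for $N$, and the behaviour of Sylow subgroups under $G\to G/N$) show that $|\Syl_{3}(N)|$ and $|\Syl_{3}(G/N)|$ both divide $|\Syl_{3}(G)|=46$, and by \autoref{frob} they are $\equiv 1\pmod 3$; since $1$ and $46$ are the only divisors of $46=2\cdot 23$ that are $\equiv 1\pmod 3$, minimality forces $O_{3}(G)=O_{3'}(G)=1$, hence $F(G)=1$, and therefore $F^{*}(G)=E(G)\ne 1$ (otherwise $G=\C_{G}(F^{*}(G))\le F^{*}(G)=1$) with $E(G)=S_{1}\times\dots\times S_{k}$ a direct product of non-abelian simple groups, using $\Z(E(G))\le F(G)=1$. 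Whenever $3$ divides $|S_{i}|$ we have $|\Syl_{3}(S_{i})|\ge 10$, while the case in which $3\nmid|S_{i}|$ for all $i$ — so each $S_{i}$ is a Suzuki group — is eliminated by a short separate argument using the classification of non-abelian simple groups of order prime to $3$ (a Sylow $3$-subgroup of $G$ would then act on $E(G)$ only through outer automorphisms, which together with $23\mid|G|$ forces $|\Syl_{3}(G)|>46$). Hence $|\Syl_{3}(E(G))|=\prod_{i}|\Syl_{3}(S_{i})|\ge 10$, so this number equals $46$ and $k=1$; and if $E(G)\ne G$ this contradicts minimality. Therefore $G$ is a non-abelian simple group with exactly $46$ Sylow $3$-subgroups.

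It remains to exclude this by the classification of finite simple groups, which is where I expect the real difficulty to lie. Because $46=[G:\N_{G}(P)]$, we have $23\mid|G|$, which narrows the field dramatically. Among alternating groups only $A_{n}$ with $n\ge 23$ are divisible by $23$; these — like every $A_{n}$ with $n\ge 7$ — have more than $46$ Sylow $3$-subgroups, whereas $|\Syl_{3}(A_{5})|=|\Syl_{3}(A_{6})|=10$. For groups of Lie type in characteristic $3$, the Sylow $3$-subgroups are the unipotent radicals of Borel subgroups, so $|\Syl_{3}(G)|=\sum_{w\in W}3^{\ell(w)}$, and a quick inspection of Weyl groups shows this is never $46$. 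For groups of Lie type in other characteristics, $23\mid|G|$ forces either $G\cong\operatorname{PSL}_{2}(23)$ or $G\cong\operatorname{PSL}_{2}(47)$, with $253$ respectively $1081$ Sylow $3$-subgroups, or else $|G|$ is so large that $|\Syl_{3}(G)|>46$ already by comparing orders; and each sporadic simple group divisible by $23$ has order at least $|M_{23}|$, with far more than $46$ Sylow $3$-subgroups. The main obstacle is thus the thoroughness of this classification step — in particular producing clean lower bounds on $|\Syl_{3}(G)|$ for the infinitely many large-order groups of Lie type, and disposing of the Suzuki case in the reduction; everything else is routine.
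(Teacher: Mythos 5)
Your opening step---using $46\equiv 1\pmod 9$ and Theorem~A to reduce everything to showing that no finite group has exactly $46$ Sylow $3$-subgroups---is exactly the paper's. From there you diverge completely, and your route has two genuine gaps. First, the reduction to a simple group: minimality does \emph{not} force $O_{3'}(G)=1$. If $N=O_{3'}(G)\ne 1$, your divisibility argument only yields $\lvert\Syl_3(G/N)\rvert\in\{1,46\}$, and in the case $\lvert\Syl_3(G/N)\rvert=1$ you get $PN\unlhd G$, hence (Frattini argument plus minimality) $G=PN$, i.e.\ $G$ has a normal $3$-complement. This is not a smaller counterexample, and it is not covered by the Suzuki discussion you place inside the $E(G)$ analysis, since you only reach $F^*(G)=E(G)$ \emph{after} assuming $F(G)=1$. (If $N$ is solvable you can kill this case with M.~Hall's theorem that Sylow numbers of solvable groups are products of prime powers each $\equiv 1\pmod p$; if $N$ is nonsolvable you need the Suzuki groups again, so the reduction as structured has a hole.) Second, and more seriously, the CFSG check is not actually carried out and its stated case division is wrong: there are infinitely many groups of Lie type in cross characteristic with order divisible by $23$ (e.g.\ $\operatorname{PSL}_2(q)$ for every prime power $q\equiv\pm1\pmod{23}$, or $\operatorname{PSL}_{11}(2)$ since $2^{11}-1=23\cdot 89$), and ``comparing orders'' gives no lower bound on $\lvert\Syl_3(G)\rvert=|G:\N_G(P)|$ without an upper bound on $|\N_G(P)|$. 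The fact you would actually need is that $G$ has a subgroup of index $46$ (indeed acts primitively on $46$ points), which forces you into the tables of minimal permutation degrees, i.e.\ essentially the classification of primitive groups of degree $46$; note for instance that $\operatorname{PSL}_2(47)$ has no subgroup of index $\le 46$ at all, so it could never have arisen.

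For comparison, the paper establishes primitivity of the action on $\Syl_3(G)$ just as you would, but then stays elementary: \autoref{wielandt} gives $2$-transitivity (since $45$ is not a square), an explicit block argument on the $9$-point $P$-orbits shows that neither $11$ nor $23$ divides $\lvert\N_G(P)\rvert$, and \autoref{brauer} applied at $p=23$ produces an irreducible character of degree $44=4\cdot 11$, which cannot divide $|G|$---a clean contradiction with no case analysis of simple groups. Your strategy can be salvaged by citing the database of primitive permutation groups of degree $46$ directly (the paper explicitly mentions this shortcut and deliberately avoids it), but as written the simple-group enumeration is the weakest link rather than a routine step.
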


With the examples mentioned above, it can be seen that $46$ is in fact the smallest pseudo Frobenius number.
We do not know if there are any pseudo Frobenius $p$-numbers congruent to $1+p$ modulo $p^2$. 
There are no such numbers below $100$ as one can check with the computer algebra system GAP~\cite{GAP48} for instance.

\section{Proofs}

In this section, $G$ always denotes a finite group with identity $1$ and $p$ is a prime number.
The proof of Theorem~A relies on the following more precise version of \autoref{KH} for odd primes (see \cite[Lemma~4.61 and Theorem~4.6]{HallFrobenius}).

\begin{Prop}\label{noncyclic}
Let $P$ be a Sylow $p$-subgroup of $G$ for some $p>2$. Then for $1<p^a<|P|$, the number of subgroups of order $p^a$ in $G$ is congruent to $1$ modulo $p^2$ if and only if $P$ is cyclic. 
\end{Prop}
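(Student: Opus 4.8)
The plan is to deduce Proposition \ref{noncyclic} from the Kulakoff--Hall theorem (\autoref{KH}) by reducing the count of subgroups of order $p^a$ in $G$ to a count inside the Sylow $p$-subgroup $P$ itself, and then invoking Kulakoff's original $p$-group result in its sharp form. First I would recall the standard fusion-theoretic fact underlying Frobenius' theorem: if $\Omega_a$ denotes the set of subgroups of order $p^a$ in $G$, then $G$ acts on $\Omega_a$ by conjugation, and $P$ acts on $\Omega_a$ with the subgroups of $P$ of order $p^a$ among the fixed-point-poor orbits; more precisely, one shows that $|\Omega_a| \equiv |\{Q \le P : |Q| = p^a\}| \pmod{p^2}$ by analyzing the $P$-orbits on $\Omega_a$. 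The key point is that any $P$-orbit of length not divisible by $p^2$ must have length $1$ or $p$, and a counting of such short orbits — using that a subgroup $Q$ of order $p^a$ with $\mathrm{N}_G(Q) \cap P$ of index $p$ in $P$ either lies in $P$ or is $G$-conjugate into $P$ in a controlled way — matches the corresponding count inside $P$. This is exactly the mechanism P.~Hall uses to lift Kulakoff's theorem from $p$-groups to arbitrary finite groups, so for the bulk of this step I would cite \cite[\S 4]{HallFrobenius} rather than reprove it.

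Granting the congruence $|\Omega_a| \equiv |\{Q \le P : |Q| = p^a\}| \pmod{p^2}$, the proposition reduces to the case $G = P$, i.e.\ to Kulakoff's theorem for $p$-groups: for $p > 2$ and $1 < p^a < |P|$, the number of subgroups of order $p^a$ in a $p$-group $P$ is $\equiv 1 \pmod{p^2}$ if and only if $P$ is cyclic. One direction is trivial: if $P$ is cyclic of order $p^n$ then for every $0 \le a \le n$ there is a unique subgroup of order $p^a$, so the count is $1$. For the converse I would argue by contraposition: assume $P$ is noncyclic and show the count of subgroups of order $p^a$ is $\equiv 1 + p \pmod{p^2}$ for every $a$ with $1 < p^a < |P|$. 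The classical route is to induct on $|P|$, treating the smallest nontrivial case first: the number of subgroups of order $p$ in a noncyclic $p$-group is $1 + p + \dots \equiv 1 + p \pmod {p^2}$ (for $p$ odd this follows since $P$ has a subgroup $\cong C_p \times C_p$ and the number of order-$p$ subgroups is $\equiv 1+p$ by a direct count modulo $p^2$, using that $P$ is not cyclic and not — for $p$ odd — of maximal-class-exceptional type). Then one passes between order $p^a$ and order $p^{a+1}$ subgroups via the duality $Q \mapsto $ (subgroups containing $Q$) and the fact that in a noncyclic $p$-group every subgroup of order $p^a < |P|$ with $1 < p^a$ is itself noncyclic or is contained in a noncyclic subgroup of order $p^{a+1}$ in $\equiv 1+p$ ways; the parity/oddness hypothesis $p>2$ is what rules out the quaternion-type obstructions that make the statement false for $p=2$ (e.g.\ $Q_8$ has $3$ subgroups of order $2$).

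The main obstacle is the $p$-group case, and specifically making the inductive step between consecutive orders $p^a$ and $p^{a+1}$ rigorous while keeping track of congruences modulo $p^2$ rather than just modulo $p$. The clean way to organize it is via the incidence count $\sum_{|Q| = p^a} |\{R : Q < R,\ |R| = p^{a+1}\}| = \sum_{|R| = p^{a+1}} |\{Q : Q < R,\ |Q| = p^a\}|$, where the inner counts on each side are $\equiv 1 + p \pmod{p^2}$ for noncyclic $Q$ or $R$ (and $\equiv 1$ for cyclic ones, of which there are few, controlled again by the $p$ odd hypothesis and Kulakoff's theorem applied to proper subgroups by induction). Rather than redo all of this, since the paper has already invoked \autoref{KH} as known and cites \cite{HallFrobenius} for the precise statement, I would present the reduction to the $p$-group case in detail and then quote \cite[Lemma~4.61 and Theorem~4.6]{HallFrobenius} for the $p$-group assertion itself, noting that the ``only if'' direction is the content of Kulakoff's refinement and the ``if'' direction is the trivial uniqueness of subgroups in a cyclic group.
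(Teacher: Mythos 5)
The paper offers no proof of this proposition at all; it simply cites P.~Hall's Lemma~4.61 and Theorem~4.6, which is exactly where your outline also ultimately lands for both the reduction from $G$ to its Sylow $p$-subgroup and the $p$-group (Kulakoff) case. Your sketch of the intervening mechanism --- the mod-$p^2$ orbit counting and the incidence-count induction between consecutive orders --- is a faithful description of Hall's argument, so this is essentially the same approach as the paper.
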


\autoref{noncyclic} does not hold for $p=2$. For instance, the dihedral group of order $8$ (i.\,e., the symmetry group of the square) has $5\equiv 1\pmod{4}$ subgroups of order $2$ (generated by the four reflections and the rotation of degree $\pi$). A precise version for $p=2$ can be found in Murai~\cite[Theorem~D]{Muraipgrp}.

Our second ingredient is a consequence of the CFSG by Blau~\cite{Blau}.

\begin{Prop}[\textsc{Blau}]\label{blau}
If the simple group $G$ has a cyclic Sylow $p$-subgroup, then every two distinct Sylow $p$-subgroups of $G$ intersect trivially.
\end{Prop}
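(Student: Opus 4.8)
\emph{Proof idea.}
Since the statement concerns simple groups directly, the plan is to invoke the classification of finite simple groups (CFSG) and to run through those that admit a cyclic Sylow $p$-subgroup, after two elementary reductions. First, if $|P|=p$ then distinct Sylow $p$-subgroups automatically meet trivially, so I may assume $|P|=p^a$ with $a\ge 2$; then $p$ is odd, since a finite group with a cyclic Sylow $2$-subgroup has a normal $2$-complement by Burnside's theorem and so is not non-abelian simple when $|P|>2$. Secondly, write $D_0$ for the unique subgroup of order $p$ of the cyclic group $P$; I claim it suffices to show that $\C_G(D_0)$ is \emph{$p$-closed}, i.e.\ has a normal Sylow $p$-subgroup. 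Indeed, every nontrivial subgroup of $P$ contains $D_0$, so a Sylow $p$-subgroup $Q$ of $G$ with $Q\cap P\ne 1$ satisfies $D_0\le Q$, whence $Q\le\C_G(D_0)$ (as $Q$ is abelian) and likewise $P\le\C_G(D_0)$; thus $P$ and $Q$ are Sylow $p$-subgroups of $\C_G(D_0)$ and coincide once that group is $p$-closed. So the problem reduces to: for each non-abelian finite simple $G$ with a cyclic Sylow $p$-subgroup of order $\ge p^2$, show that $\C_G(D_0)$ is $p$-closed.

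Now the case analysis forced by CFSG. The alternating group $A_n$ ($n\ge 5$) has a cyclic Sylow $p$-subgroup only for $p\le n<2p$, which forces $|P|=p$; so alternating groups drop out. The same is true of the groups of Lie type in defining characteristic, where the Sylow $p$-subgroup is a maximal unipotent subgroup and is cyclic only for a rank-one group over the prime field, again with $|P|=p$. The sporadic groups I would settle by a finite inspection of the \textsc{Atlas}: only finitely many pairs $(G,p)$ carry a cyclic Sylow $p$-subgroup of order $\ge p^2$, and for each of them the structure of $\C_G(D_0)$ --- equivalently, the fact that a non-identity $p$-element fixes exactly one point in the permutation action of $G$ on the cosets of $\N_G(P)$ --- can be read off directly.

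The real content lies in the groups of Lie type $G=\mathbf{G}^F$ over $\mathbb{F}_q$ in a non-defining characteristic $p$. There a nontrivial element of $P$ is semisimple, so $D_0=\langle s\rangle$ with $s$ semisimple of order $p$, and by Deligne--Lusztig theory $\C_G(D_0)=\C_{\mathbf G}(s)^F$ has the structure of a reductive group over $\mathbb{F}_q$; since $P\le\C_G(D_0)$ is already Sylow in $G$, it is the full Sylow $p$-subgroup of $\C_G(D_0)$, and the point is that it is normal there. Writing $|\mathbf{G}^F|$ as a product of cyclotomic factors $\Phi_e(q)$ and putting $d=\operatorname{ord}_p(q)$, cyclicity of $P$ forces the factor $\Phi_d(q)$ to appear with ``multiplicity one'' (with a mild extra coprimality condition when $p$ is small), and this in turn forces $p\nmid\bigl|[\C_{\mathbf G}(s)^{\circ},\C_{\mathbf G}(s)^{\circ}]^F\bigr|$; hence all $p$-elements of $\C_G(D_0)$ lie in the central torus $\Z(\C_{\mathbf G}(s)^{\circ})$, which is abelian and normalised by $\C_{\mathbf G}(s)$. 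Being the characteristic Sylow $p$-subgroup of that abelian group, $P$ is normal in $\C_G(D_0)$, as required. The hard part --- and the real substance of Blau's work --- is making this uniform: keeping track of the isogeny type and the twisted groups, controlling the arithmetic of $\Phi_d(q)$ at the small primes where $\C_{\mathbf G}(s)$ can behave exceptionally, and running through the exceptional types one by one. Everything preceding that step is either formal group theory or a bounded verification.
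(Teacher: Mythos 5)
The paper does not prove this proposition at all: it is imported verbatim from Blau's 1985 paper \cite{Blau} as a known consequence of the CFSG, so there is no in-paper argument to compare yours against. Your two elementary reductions are correct and are the right way to start: the case $|P|=p$ is trivial, Burnside's transfer argument disposes of $p=2$, and the observation that $P$ and any $Q$ with $P\cap Q\ne 1$ both centralize the unique minimal subgroup $D_0\le P$ (so that it suffices to prove $\C_G(D_0)$ is $p$-closed) is exactly the reduction on which Blau's case analysis runs. The eliminations of $A_n$ and of defining-characteristic Lie type groups (cyclic Sylow forces $|P|=p$ in both families) are also sound.

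However, what you have written is a roadmap rather than a proof: the entire substance of the theorem sits in the cross-characteristic Lie type case, and there every key assertion is stated but not established. Specifically, (i) the claim that cyclicity of $P$ forces $\Phi_d(q)$ to occur with multiplicity one in the order polynomial is the generic Sylow theory of Broué--Malle and needs the hypothesis $p\nmid|W|$ (otherwise the Sylow $p$-subgroup need not be abelian and the torus picture breaks down, which is precisely where the ``small primes'' live); (ii) the deduction that $p\nmid\bigl|[\C_{\mathbf G}(s)^{\circ},\C_{\mathbf G}(s)^{\circ}]^F\bigr|$ is a nontrivial cyclotomic computation that must be checked type by type; and (iii) $p$-elements of $\C_G(D_0)$ need not lie in $\C_{\mathbf G}(s)^{\circ F}$ when the component group $\C_{\mathbf G}(s)/\C_{\mathbf G}(s)^{\circ}$ has order divisible by $p$, so the step ``hence all $p$-elements lie in the central torus'' has a genuine hole for primes dividing the fundamental group. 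You flag these issues honestly, but flagging them is not the same as closing them; as a self-contained argument the proposal stops exactly where Blau's paper begins. For the purposes of this paper that is acceptable --- the result is a citation, not a claim of proof --- but the proposal should not be mistaken for a proof of \autoref{blau}.
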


\begin{proof}[Proof of Theorem~A]
We may assume that $p$ is odd.
Let $n\equiv 1\pmod{p^2}$ be a minimal counterexample. Then there exists a group $G$ of minimal order such that the number of subgroups of order $p^a$ for some $a\ge 0$ is $n$. Since obviously $n>1$, we have $a\ge 1$. Moreover, since $n$ is not a Sylow $p$-number, it follows that $p^{a+1}$ divides $|G|$. By \autoref{noncyclic}, $G$ has a cyclic Sylow $p$-subgroup $P$. Since every Sylow $p$-subgroup contains exactly one subgroup of order $p^a$, the subgroups $Q=Q_1,\ldots,Q_n\le G$ of order $p^a$ form a conjugacy class in $G$. Furthermore, the number of Sylow $p$-subgroups must be greater than $n$ and this implies that some $Q_i$ is contained in two distinct Sylow $p$-subgroups. Hence by \autoref{blau}, $G$ is not simple. 

Thus, let $N$ be a nontrivial proper normal subgroup of $G$. Let $n_1$ be the number of subgroups of order $p^a$ in $QN$ (note that this number does not depend on the choice of $Q$, since every $Q_iN$ is conjugate to $QN$). Since $PN/N$ is a cyclic Sylow $p$-subgroup of $G/N$, every subgroup of order $|QN/N|$ in $G/N$ is of the form $Q_iN/N$ for some $i$. We denote the number of these subgroups by $n_2$ and conclude that $n=n_1n_2$. By construction, $n_1$ and $n_2$ are Frobenius $p$-numbers.

Suppose that $n_i\not\equiv 1\pmod{p^2}$ for some $i\in\{1,2\}$. Then $n_1\not\equiv 1\not\equiv n_2\pmod{p^2}$, since $n_1n_2= n\equiv 1\pmod{p^2}$. By \autoref{noncyclic}, $Q$ must be a Sylow $p$-subgroup of $QN$, that is 
\begin{equation}\label{equ}
|QN:Q|\not\equiv 0\pmod{p}.
\end{equation}
Similarly, $QN/N\in\Syl_p(G/N)$ or $QN/N=1$ according to \autoref{noncyclic}. In the latter case, $N$ contains $Q_1,\ldots,Q_n$ since they are all conjugate to $Q$. However, this contradicts the minimality of $G$. Hence, $QN/N$ is a Sylow $p$-subgroup of $G/N$ and $|G:QN|=|G/N:QN/N|\not\equiv 0\pmod{p}$. In combination with \eqref{equ}, we obtain 
\[|G:Q|=|G:QN||QN:Q|\not\equiv 0\pmod{p}.\]
But this contradicts the observation that $p^{a+1}$ divides $|G|$.

Consequently, $n_1\equiv n_2\equiv1 \pmod{p^2}$. The minimal choice of $G$ yields $n_2<n$. Similarly, $n_1=n$ implies $G=QN$. In this case, $P=QN\cap P=Q(N\cap P)$ (modular law) and since $P$ is cyclic we even have $Q\subseteq N\cap P\subseteq N$ and $G=QN=N$, another contradiction. Thus, $n_1<n$. Since $n$ is a minimal counterexample to our theorem, $n_1$ and $n_2$ must be Sylow $p$-numbers, since they are Frobenius $p$-numbers. Let $H_i$ be a finite group with exactly $n_i$ Sylow $p$-subgroups for $i=1,2$. Then 
\[\Syl_p(H_1\times H_2)=\{S_1\times S_2:S_i\in\Syl_p(H_i)\}\] 
and $n=n_1n_2$ is a Sylow $p$-number (of $H_1\times H_2$). This final contradiction completes the proof.
\end{proof}

As in the previous paper~\cite{SambaleSylow}, we make use of the first principles of group actions. Recall that an \emph{action} of $G$ on a finite nonempty set $\Omega$ is a map $G\times\Omega\to\Omega$, $(g,\omega)\mapsto{^g\omega}$ such that $^1\omega=\omega$ and $^g({^h\omega})={^{gh}\omega}$ for $g,h\in G$ and $\omega\in\Omega$. Every action gives rise to a homomorphism $G\to\Sym(\Omega)$ into the symmetric group on $\Omega$, and the action is called \emph{faithful} whenever this homomorphism is injective. In this case $G$ is a \emph{permutation group} of \emph{degree} $|\Omega|$. 
The \emph{orbit} of $\omega\in\Omega$ under $G$ is the subset $^G\omega:=\{^g\omega:g\in G\}\subseteq\Omega$. 
The \emph{orbit-stabilizer theorem} states that 
\[|^G\omega|=|G:G_\omega|\] 
where $G_\omega:=\{g\in G:{^g\omega=\omega}\}$ is the \emph{stabilizer} of $\omega\in\Omega$.
We say that $G$ acts \emph{transitively} on $\Omega$ if there is only one orbit, i.\,e., $\Omega={^G\omega}$ for any $\omega\in\Omega$. A subset $\Delta\subseteq\Omega$ is called a \emph{block} if $^g\Delta\cap\Delta\in\{\Delta,\varnothing\}$ for every $g\in G$. A transitive action is called \emph{primitive} if there are no blocks $\Delta$ with $1<|\Delta|<|\Omega|$. This happens if and only if $G_\omega$ is a maximal subgroup of $G$ for any $\omega\in\Omega$. Finally, a transitive action is \emph{$2$-transitive} if $G_\omega$ acts transitively on $\Omega\setminus\{\omega\}$ for any $\omega\in\Omega$. 
In the following we are mainly interested in the transitive conjugation action of $G$ on $\Syl_p(G)$. Here the stabilizer of $P\in\Syl_p(G)$ is the \emph{normalizer} $\N_G(P):=\{g\in G:gP=Pg\}$.

In the proof of Corollary~B we apply two further results. The first appeared in Wielandt~\cite{Wielandt2p} and was reproduced in Cameron's book~\cite[Theorem~3.25]{Cameron}.

\begin{Prop}[\textsc{Wielandt}]\label{wielandt}
Let $G$ be a primitive permutation group of degree $2p$. Then $G$ is $2$-transitive or $2p-1$ is a square.
\end{Prop}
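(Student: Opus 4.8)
The plan is to study a Sylow $p$-subgroup $P$ of $G$; write $\Omega$ for the underlying set, so $|\Omega|=2p$. We may assume $p$ is odd, since for $p=2$ the degree is $4$ and the only primitive groups of degree $4$, namely $A_4$ and $S_4$, are $2$-transitive. Because $p^2>2p$, we have $|P|=p$, so $P=\langle x\rangle$ is cyclic of order $p$, and $x$ acts on $\Omega$ either as a single $p$-cycle fixing the other $p$ points or as a product of two disjoint $p$-cycles.

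In the first case $x$ is a cycle of prime length $p$ with $p\le 2p-3$ (as $p\ge 3$), so by Jordan's classical theorem on primitive groups containing a cycle of prime length, $G$ contains $A_{2p}$, which is $2$-transitive. Hence from here on $x$ is a product of two disjoint $p$-cycles; in particular $P$ fixes no point of $\Omega$, so $p\nmid|H|$ for a point stabiliser $H$.

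For this main case I would analyse the permutation character $\pi=1_H^G$ through the action of $x$ on the permutation module $\mathbb{C}[\Omega]$: since $x$ has cycle type $(p,p)$, every primitive $p$-th root of unity occurs as an eigenvalue of $x$ on $\mathbb{C}[\Omega]$ with multiplicity exactly $2$, and $1$ with multiplicity $2$. Together with the rationality of $\pi$ and the fact that no nontrivial linear character can be a constituent of $\pi$ (because $H$ is a corefree maximal subgroup of $G$ and $2p$ is composite), this determines the nontrivial constituents of $\pi$: there is exactly one rational-valued constituent $\chi_1$, of degree $p$, on which $x$ acts with the regular character of $P$, together with one full Galois-conjugacy class $\psi_1,\dots,\psi_m$ of characters of common degree $(p-1)/m$. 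Hence $G$ has rank $m+2$, and $m=1$ is precisely the rank-$3$ case.

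Finally I would pass to orbital graphs. In the rank-$3$ case both nontrivial suborbits are self-paired (otherwise the two equal suborbit lengths would sum to the odd number $2p-1$), so pick one of them, of length $k$ with $0<k<2p-1$; the corresponding orbital graph $\Gamma$ is $k$-regular, connected by primitivity, and — since $\chi_1$ and the remaining constituent $\psi_1$ are rational-valued — has integer adjacency eigenvalues $k$, $r$, $s$ of multiplicities $1$, $p$, $p-1$ in some order. Thus $\Gamma$ is strongly regular on $v=2p$ vertices; because $v$ is even $\Gamma$ is not a conference graph, and substituting $v=2p$ and the multiplicity pair $\{p-1,p\}$ into the standard identities for strongly regular graphs forces $\mu-\lambda\in\{1,2\}$. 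Since $\mu-\lambda=2$ would make $\Gamma$ or its complement have valency $1$, which is impossible for a faithful primitive action, we obtain $\mu-\lambda=1$, whence $k=\tfrac{1}{2}\bigl((2p-1)\pm\sqrt{2p-1}\bigr)$; integrality of $k$ then forces $2p-1$ to be a perfect square, as claimed. The main obstacle is the reduction to rank $3$, that is, proving $m=1$: one starts from the inclusion $T\le S$, where $T$ is the image of $\N_G(P)$ in $\operatorname{Aut}(P)$ and $S$ is the Galois stabiliser of $\psi_1$ (and $T\ne 1$, since otherwise Burnside's normal $p$-complement theorem would yield a proper normal, hence transitive, subgroup of index $p$ in $G$, contradicting $p\nmid|H|$); but establishing $m=1$ — or else carrying the eigenvalue computation through for $m\ge 2$ — is where the real work lies.
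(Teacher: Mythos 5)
The paper does not prove this proposition at all: it is imported verbatim from Wielandt \cite{Wielandt2p} (see \cite[Theorem~3.25]{Cameron}), so there is no internal argument to compare yours against, and your proposal has to stand on its own. As written it does not. A first, repairable, error is the claim ``because $p^2>2p$, we have $|P|=p$'': the order of a Sylow $p$-subgroup of $G\le S_{2p}$ is bounded by the $p$-part of $(2p)!$, which is $p^2$, not by the degree, and indeed $A_{2p}$ is a primitive group of degree $2p$ with $|P|=p^2$. The correct reduction is the Jordan argument you invoke one sentence too late: if $p^2\mid |G|$, then $P$ is the full Sylow $p$-subgroup of $S_{2p}$ and contains a $p$-cycle, so Jordan's theorem already gives $G\ge A_{2p}$ and $2$-transitivity; only after disposing of $p$-cycles may you assume $|P|=p$ with a generator of cycle type $(p,p)$.

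The decisive problem is the one you flag yourself: you cannot prove $m=1$, nor push the computation through for $m\ge 2$. That is not a technical footnote to be deferred --- it is the substance of Wielandt's theorem. The material before it is correct and standard (in the non-$2$-transitive case $\pi=1_G+\chi_1+\psi_1+\dots+\psi_m$ is multiplicity-free with $\chi_1$ rational of degree $p$, $\chi_1|_P$ the regular character, and the $\psi_j$ a single Galois orbit of total degree $p-1$), and your strongly-regular-graph endgame in the rank-$3$ case does force $2p-1$ to be a square, consistent with the Petersen graph at $p=5$. But without bounding $m$ the argument proves nothing, since a priori the rank could be as large as $p+1$. Wielandt closes exactly this gap by a further arithmetic analysis of the values $\psi_j(x)$ and of the action of $\N_G(P)/P$ on the two $P$-orbits and on the $\psi_j$, which pins down the subdegrees; this is precisely the step your proposal leaves open, so the proof is incomplete at its central point.
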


It is another consequence (which we do not need) of the CFSG that the second alternative in \autoref{wielandt} only occurs for $p=5$. 

The second tool for Corollary~B is a consequence of Brauer's theory of $p$-blocks of defect $1$~\cite{Brauerdef11} and can be extracted from Navarro's book~\cite[Theorem~11.1]{Navarro}. Here, $\Irr(G)$ is the set of irreducible complex characters of $G$ and the trivial character is denoted by $1_G$.

\begin{Prop}[\textsc{Brauer}]\label{brauer}
Suppose that $G$ has a Sylow $p$-subgroup $P$ of order $p$ such that $\C_G(P)=P$ and $e:=\lvert\N_G(P)/P\rvert$. Then there exists a set of irreducible characters 
\[B=\{1_G=\chi_1,\ldots,\chi_e,\psi_1,\ldots,\psi_{(p-1)/e}\}\subseteq\Irr(G)\] 
and signs $\epsilon_1,\ldots,\epsilon_e\in\{\pm1\}$ such that
\begin{align*}
\chi_i(1)&\equiv\epsilon_i\pmod{p}&&(1\le i\le e),\\
\psi_j(1)&=\Bigl|\sum_{i=1}^e\epsilon_i\chi_i(1)\Bigr|&&(1\le j\le (p-1)/e),\\
\mu(1)&\equiv 0\pmod{p}&&(\forall\mu\in\Irr(G)\setminus B).
\end{align*}
\end{Prop}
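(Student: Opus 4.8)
I would prove \autoref{brauer} by recognising that $B$ must be the principal $p$-block of $G$ and then invoking Brauer's structure theory for blocks of defect $1$. First I would note that, because $P\in\Syl_p(G)$ has order $p$, the principal block $B_0$ of $G$ has maximal defect and its defect groups are exactly the Sylow $p$-subgroups; thus $B_0$ is a block with cyclic defect group $P$ of order $p$, i.e.\ a block of defect $1$. Its inertial index is, as always for a principal block, $e(B_0)=|\N_G(P):P\C_G(P)|$, and since $\C_G(P)=P$ by hypothesis this equals $|\N_G(P):P|=e$. So the parameters of $B_0$ are precisely the $p$ and $e$ of the statement.

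Next I would feed $B_0$ into Brauer's classification of blocks with defect group cyclic of order $p$ (e.g.\ \cite[Chapter~11]{Navarro}). This attaches to $B_0$ a Brauer tree with $l(B_0)=e$ edges, hence $e+1$ vertices, of which one is the exceptional vertex with multiplicity $m:=(p-1)/e$; the $e$ non-exceptional vertices give pairwise distinct ordinary characters $\chi_1,\dots,\chi_e$ and the exceptional vertex gives $m$ ordinary characters $\psi_1,\dots,\psi_m$ all of the same degree, and these $e+m$ characters are exactly the irreducible characters lying in $B_0$. Since the trivial character is always a non-exceptional vertex of a principal block, I may label things so that $\chi_1=1_G$. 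The same theory equips the tree with a two-colouring (colour the exceptional vertex, then alternate along edges) whose associated signs $\epsilon_i\in\{\pm1\}$ satisfy $\chi_i(1)\equiv\epsilon_i\pmod p$, together with the nodal relation $\psi_j(1)=\bigl|\sum_{i=1}^e\epsilon_i\chi_i(1)\bigr|$ for the common exceptional degree. Setting $B:=B_0$, these are the first two displayed lines of the statement.

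For the last line I would argue that $B$ is the only block of $G$ of positive defect. Indeed, $|G|$ is divisible by $p$ but not by $p^2$, so every $p$-block of $G$ has defect $0$ or $1$, and a block of defect $1$ has defect group $P$. By Brauer's first main theorem the blocks of $G$ with defect group $P$ correspond bijectively to the blocks of $\N_G(P)$ with defect group $P$; but $\N_G(P)$ has the self-centralising normal Sylow $p$-subgroup $P$ of order $p$, and a short count of its $p$-regular classes shows it has only one block of positive defect, namely its principal block. Hence the same is true of $G$, every $\mu\in\Irr(G)\setminus B$ lies in a block of defect $0$, and therefore $p\mid\mu(1)$.

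The genuine content here all sits in the middle paragraph: there is no elementary route to the Brauer tree, the exceptional characters, and the degree congruences, so the main obstacle is simply that one must import Brauer's (comparatively deep) theory of defect-$1$ blocks wholesale; everything around it is bookkeeping about defect groups and the Brauer correspondence.
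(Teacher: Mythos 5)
The paper offers no proof of this proposition at all: it is stated as a known result of Brauer, to be ``extracted from'' Navarro's Theorem~11.1 on blocks of defect~$1$, which is precisely the machinery you invoke (principal block of defect $1$, inertial index $e=|\N_G(P):P\C_G(P)|=|\N_G(P):P|$, the $e$ non-exceptional and $(p-1)/e$ exceptional characters with their sign congruences, plus the observation that all remaining characters lie in defect-zero blocks and hence have degree divisible by $p$). Your write-up is a correct and slightly more explicit unpacking of that same citation, so it takes essentially the same approach as the paper.
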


The special case $e=1$ in \autoref{brauer} leads to $1=1_G(1)=\chi_1(1)=\psi_1(1)=\ldots=\psi_{p-1}(1)$ and $|G:G'|=p$ where $G'$ is the commutator subgroup of $G$ (see \cite[Problem~15.6]{IsaacsAlgebra}). 
In general, \autoref{brauer} provides information on $|G|$, because it is known that the irreducible character degrees divide the group order (see \cite[Problem~28.12]{IsaacsAlgebra}). 

Recall that every action of $G$ on $\Omega$ gives rise to a \emph{permutation character} $\pi$ which counts the number of fixed points, that is, $\pi(g):=|\{\omega\in\Omega:{^g\omega=\omega}\}|$ for $g\in G$ (see \cite[Section~2.5]{Cameron}). The action is $2$-transitive if and only if $\pi=1_G+\chi$ for some $\chi\in\Irr(G)\setminus\{1_G\}$.

\begin{proof}[Proof of Corollary~B]
By Theorem~A, it suffices to show that $46$ is a pseudo Sylow $3$-number, because $46\equiv 1\pmod{9}$. 
Let $G$ be a minimal counterexample such that $\lvert\Syl_3(G)\rvert=46$. By Sylow's theorem, $G$ acts transitively on $\Syl_3(G)$. If $K\unlhd G$ is the kernel of this action, then it is easy to see that $G/K$ has the same number of Sylow $3$-subgroups (see \cite[Step~1 of proof of Theorem~A]{SambaleSylow}). Thus, by minimality $K=1$ and $G$ acts faithfully on $\Syl_3(G)$. In particular, we can and will regard $G$ as a subgroup of the symmetric group $S_{46}$. Then, every Sylow $3$-subgroup lies in the alternating group $A_{46}$ and minimality implies $G\le A_{46}$.
For $P\in\Syl_3(G)$ let $\N_G(P)<M\le G$. Then $P\in\Syl_3(M)$ and 
\[\lvert\Syl_3(M)\rvert=|M:\N_M(P)|=|M:\N_G(P)|\in\{2,23,46\}\] 
by Lagrange's theorem. Since $2$ and $23$ are not congruent to $1$ modulo $3$, we must have $M=G$. Hence, $\N_G(P)$ is a maximal subgroup of $G$ and therefore $G$ acts primitively on $\Syl_3(G)$. 

[At this point we could refer to the database of primitive permutation groups of small degree (see for instance Dixon--Mortimer~\cite[Appendix~B]{DM} or \cite{GAP48,oeis2}). However, this database is based on the Aschbacher--O'Nan--Scott theorem and relies ultimately on the CFSG. We prefer to give a classification-free argument along the lines of M.~Hall's paper~\cite{MHall}.]

Since $45$ is not a square, \autoref{wielandt} implies that $G$ acts $2$-transitively on $\Syl_3(G)$, i.\,e., $\N_G(P)$ acts transitively on $\Syl_3(G)\setminus\{P\}$. Hence, for $Q\in\Syl_3(G)\setminus\{P\}$, the 2-point stabilizer $\N_G(P)\cap\N_G(Q)$ has index $45$ in $\N_G(P)$ by the orbit-stabilizer theorem. Since $\N_P(Q)$ is a Sylow $3$-subgroup of $\N_G(P)\cap\N_G(Q)$, the orbit $^PQ$ of $P$ has size
\[|^PQ|=|P:\N_P(Q)|=9.\]
For $g\in\N_G(P)$ we have 
\[^g({^PQ})={^{gP}Q}={^{Pg}Q}={^P({^gQ})}.\] 
Since the orbits of $P$ are disjoint, $^PQ$ is a block of $\N_G(P)$. Since $\N_G(P)$ is transitive on $\Syl_3(G)\setminus\{P\}$, the distinct conjugates of $^PQ$ form a partition of $\Syl_3(G)\setminus\{P\}$ into five blocks with nine points each. Moreover, $\N_G(P)$ permutes these blocks. Suppose that there exists an element $x\in\N_G(P)$ of order $11$. Then $x$ must fix each of the five blocks. On the other hand, $x$ cannot permute nine points nontrivially. Hence, $x$ cannot exist and by Cauchy's theorem, $|G|=46\lvert\N_G(P)\rvert$ is not divisible by $11$. 
Similarly, $\lvert\N_G(P)\rvert$ is not divisible by $23$. 

Now let $S\in\Syl_{23}(G)$. Then $|S|=23$ and $S$ is generated by a product of two disjoint $23$-cycles, since $\lvert\N_G(P)\rvert$ is not divisible by $23$. It follows that $\C_G(S)\le\C_{A_{46}}(S)=S$ (see \cite[Lemma~5]{SambaleSylow}). Moreover, $\lvert\N_G(S)/S\rvert$ divides $22$ (see \cite[Lemma~6]{SambaleSylow}). By Lagrange's theorem, $\lvert\N_G(S)\rvert$ is not divisible by $11$ and therefore $\lvert\N_G(S)/S\rvert\in\{1,2\}$. 
In the first case, $|G:G'|=23$ by the remark after \autoref{brauer}. 
However, this contradicts the minimal choice of $G$, since every Sylow $3$-subgroup of $G$ lies in $G'$.
Hence, $\lvert\N_G(S)/S\rvert=2$. 

The permutation character of our $2$-transitive group $G$ has the form $1_G+\chi$ where $\chi\in\Irr(G)$ has degree $45$ (see \cite[Section~2.5]{Cameron}). With the notation of \autoref{brauer} for $p=23$, we have $\psi_j(1)\equiv\pm2\pmod{23}$ for $j=1,\ldots,11$ and it follows that $\chi=\chi_2$, $\epsilon_2=-1$ and $\psi_1(1)=|1-45|=44$. However, the degree of every irreducible character divides the group order, but $|G|$ is not divisible by $44=4\cdot 11$. Contradiction.
\end{proof}

It is possible to prove Corollary~B directly without appealing to Theorem~A or \autoref{blau}. 
To do so, one has to study the conjugation action on the set of $46$ subgroups of a fixed $3$-power order which is still ($2$-)transitive by \autoref{noncyclic}.

Using the database of primitive permutation groups mentioned in the proof, it is easy to show that $51$ is a pseudo Frobenius $5$-number.  

\section*{Acknowledgment}
The author thanks the anonymous reviewer for pointing out a missing argument in the proof of Theorem~A.
This work is supported by the German Research Foundation (projects SA 2864/1-1 and SA 2864/3-1).

\end{document}